\documentclass[twoside,11pt]{amsart}

\usepackage[utf8]{inputenc}

\usepackage{hyperref}
\hypersetup{
  colorlinks   = true,    
  urlcolor     = blue,    
  linkcolor    = blue,    
  citecolor    = red      
}
\usepackage{mathtools}
\usepackage{latexsym}

\usepackage{color}

\usepackage{amsmath}
\usepackage{amsfonts}
\usepackage{amssymb}
\usepackage{graphicx}
\usepackage{color}
\usepackage[normalem]{ulem}
\usepackage[left=2cm,right=2cm,top=2cm,bottom=2cm]{geometry}
\newtheorem{theorem}{Theorem}[section]
\newtheorem{lemma}{Lemma}[section]

\newtheorem{prop}{Proposition}[section]

\def\bhag#1{\noindent
\setcounter{equation}{0}
\section{#1}
}

\def\RR{{\mathbb R}}
\def\CC{{\mathbb C}}
\def\ZZ{{\mathbb Z}}

\def\PPI{{{\rm I}\kern-1pt\Pi}}
\def\SS{{\mathbb S}}

\def\b #1;{{\bf #1}}
\def\x{{\bf x}}

\def\y{{\bf y}}

\def\w{{\bf w}}

\def\WW{{\mathbb W}}

\def\be{\begin{equation}}
\def\ee{\end{equation}}
\def\bea{\begin{eqnarray}}
\def\eea{\end{eqnarray}}
\def\ls{\lesssim}

\def\donchitre#1#2{\vskip 6.5cm\noindent
\parbox[t]{1in}{\special{eps:#1.eps x=6.5cm y=5.5cm}}
\hbox to 7cm{}\parbox[t]{0.0cm}{\special{eps:#2.eps x=6.5cm y=5.5cm}}}

\def\tn{|\!|\!|}
\def\XX{{\mathbb X}}

\def\bs#1{{\boldsymbol{#1}}}
\def\bsw{\bs{\omega}}

\newcommand\ol[1]{\overline{#1}}

\newcommand{\ben}{\begin{enumerate}}
\newcommand{\een}{\end{enumerate}}
\newcommand{\bit}{\begin{itemize}}
\newcommand{\eit}{\end{itemize}}

\newcommand{\bS}{\mathbb{S}}
\newcommand{\R}{\mathbb{R}}
\newcommand{\bsx}{\boldsymbol{x}}
\newcommand{\bsy}{\boldsymbol{y}}

\newcommand{\Kex}{\mathbb{K}}  

\numberwithin{equation}{section}

\title{Spectral Bounds for Kernel Quadrature}

\author{A. Cloninger}
\address{Department of Mathematics and
Halicioglu Data Science Institute
University of California, San Diego}
\email{acloninger@ucsd.edu}
\author{Q.~T.~Le Gia}
\address{School of Mathematics and Statistics, University of New South Wales, Sydney, Australia.}
\email{qlegia@unsw.edu.au}
\author{H. N. Mhaskar}
\address{Institute of Mathematical Sciences, Claremont Graduate University, Claremont, CA 91711, U.S.A.}
\email{hrushikesh.mhaskar@cgu.edu}
\thanks{The research of HNM was supported in part by ONR grants N00014-23-1-2394, N00014-23-1-2790, and in part, by a visiting researcher position at Sydney Mathematical Research Institute (SMRI), Australia. The research of AC was supported in part by NSF CISE-2403452.} 
\begin{document}
\begin{abstract}
A bottleneck in the theory of kernel methods in machine learning is the storage requirement. 
To ameliorate this, a standard trick is to replace the kernel with an explicit feature map. 
Perhaps, the most well known example is the Gaussian kernel which can be expressed in terms of the Fourier features.
Analytically, the kernel $K$ can be expressed in terms of an integral expression that involves a possibly asymmetric kernel $G$ representing the feature map.
Numerically, one needs to approximate this integral by a suitable numerical integration scheme, typically Monte Carlo.
In this paper, we demonstrate that the eigenvalues of $K$ are approximated much better by the eigenvalues of the kernel obtained by discretizing the integral using suitable quadrature formulas instead. 
We illustrate this fact in the case of the Gaussian kernel and neural tangent kernels on the unit sphere of a four dimensional Euclidean space corresponding to the sigmoid and ReLU activation functions.
\end{abstract}

\maketitle

\section{Introduction}
Kernel methods are among the most reliable tools for learning nonlinear functions in
high-dimensional spaces, offering both strong empirical performance and a mature
theoretical foundation. Their central obstacle is scale: a method that works with a
kernel $K$ on a dataset of $M$ points must, in its naive form, construct and operate on the
$M \times M$ Gram matrix $\mathbb{K} = [K(x_k, x_j)]$. Storing this matrix costs $O(M^2)$ memory,
and the linear-algebraic operations at the heart of kernel ridge regression, Gaussian
processes, and kernel SVMs, cost up to
$O(M^3)$ time. For modern datasets this is prohibitive.
 
The dominant response to this bottleneck is to \emph{replace the kernel with an explicit
feature map}. Most kernels of interest admit an integral representation
\begin{equation}
  K(x, y) = \int_{\mathbb{W}}G(w, x)\, \ol{G(w, y)}\, d\mu^*(w), \qquad x, y \in \mathbb{X},
\end{equation}
where $\mu^*$ is a probability measure on a parameter space $\mathbb{W}$ and $G(w, \cdot)$ is a
feature function. If we can approximate this integral by a finite sum over points
$w_1, \dots, w_N$, then mapping each data point $x$ to the explicit feature vector
$z(x) = [G(w_i, x)]_{i=1}^N$ turns kernel evaluation back into an ordinary inner product,
$K(x,y) \approx \langle z(x), z(y)\rangle$. One then runs \emph{linear} methods on the
$N$-dimensional features, with cost linear rather than quadratic in $M$. The quality of
the kernel machine is therefore inherited entirely from the quality of the underlying
numerical integration rule.
 
The most influential instance of this idea is \textbf{Random Fourier Features (RFF)} of
Rahimi and Recht~\cite{rahimi2007random}, which approximates the integral by Monte Carlo:
draw $w_1, \dots, w_N$ i.i.d.\ from $\mu^*$ and form
$\widehat{K}(x,y) = \tfrac{1}{N}\sum_i G(w_i, x) \ol{G(w_i, y)}$. This gives a \emph{pointwise}
approximation with the familiar Monte Carlo rate,
$|K(x,y) - \widehat{K}(x,y)| = O(1/\sqrt{N})$ for any fixed pair $x, y$.  The trouble is that pointwise accuracy is not what kernel methods actually need. Learning
guarantees, conditioning of the regularized system, and generalization bounds all depend
on a \emph{spectral} notion of approximation, meaning that the entire eigenvalue spectrum of
$\widehat{K}$ tracks that of $K$, in the operator sense
\begin{equation}\label{eq:goaleqn}
  (1-\Delta)(K + \lambda I ) \ll (\widehat{K}+\lambda I) \ll (1+\Delta) (K + \lambda I),
\end{equation}
where for symmetric matrices $A$ and $B$, $A\ll B$ means that $B-A$ is positive semi-definite.   
Achieving this with random features is far more demanding than achieving pointwise
accuracy. In practice the number of random measurements $N$ required for a spectral
guarantee often \emph{exceeds the number of data points $M$}, at which point storing and
computing with $[G(w_i, x)]$ is no cheaper than working with the original Gram
matrix. The root cause is statistical: the Monte Carlo
error of a random rule decays only as $N^{-1/2}$, regardless of how smooth or
well-behaved the feature functions are. Randomness cannot exploit structure that a
carefully placed set of points could.
 
This paper treats the integral representation as a problem in \textbf{numerical
quadrature}. Rather than sampling $\mu^*$ at random, we approximate it by a deliberately
constructed, finitely supported quadrature measure $\nu$ that integrates a chosen
subspace of functions exactly. Our central object is the approximation
\begin{equation}
  K(\nu; x, y) = \int_\mathbb{W} G(w, x)\, \ol{G(w, y)}\, d\nu(w),
\end{equation}
and our central question is how the \emph{spectral} error between $K$ and
$K(\nu; \cdot, \cdot)$ depends on the quadrature rule. The payoff of this viewpoint is
twofold. First, a good quadrature rule converges at a rate governed by the
approximation-theoretic quality of the features, which can be exponentially fast for analytic
features, rather than the universal $N^{-1/2}$ ceiling of Monte Carlo. Second, and
crucially for the storage problem above, exact quadrature rules on structured domains can
attain a target accuracy with a \emph{small} number of points $N$, so the explicit
feature vectors $z(x)$ are both short and highly expressive. Fewer, better-placed points
produce more spectral fidelity per unit of storage, which is precisely the resource that
random features waste.
 
\subsection{Related work}
 
\paragraph{ \bf Random features.}
The starting point is the random features framework of Rahimi and
Recht~\cite{rahimi2007random}, which made explicit feature maps a practical alternative to
the kernel trick and established the $O(1/\sqrt{N})$ pointwise rate. Subsequent analyses
sharpened the picture in the direction that matters for learning. Avron, et al~\cite{avron2017random} studied random Fourier features for
kernel ridge regression specifically through the lens of \emph{spectral} matrix
approximation, giving tight bounds on the number of features needed for a spectral
guarantee of the form $K - \lambda I \ll \widehat{K} \ll K + \lambda I$ and showing
how such a guarantee implies statistical bounds for kernel ridge regression. They also showed
that vanilla RFF is suboptimal, and that sampling from a modified, leverage-score
distribution improves the feature count. Our work shares their spectral target, but differs in mechanism: where they
reduce the feature count by \emph{reweighting the sampling distribution}, we eliminate
sampling altogether and place points by quadrature, trading a statistical rate for an
approximation-theoretic one.
 
\paragraph{\bf Quadrature and random features.}
That the two views are connected is well known. Bach~\cite{bach2017equivalence} proved an
equivalence between kernel-based quadrature rules and random feature expansions, with
sample-complexity bounds expressed through the eigenvalues of the integral operator. This
work frames the deep correspondence; our contribution is to push the \emph{quadrature}
side of it concretely, constructing rules on the sphere and on $\mathbb{R}^q$ and tracking
the resulting spectral error of the Gram matrix directly.
 
\paragraph{\bf Deterministic feature maps.}
Closest in spirit is the work of Dao, De Sa, and Ré~\cite{dao2017gaussian}, who construct
deterministic feature maps by approximating the kernel's frequency-domain integral with
Gaussian quadrature, and prove that error $\epsilon$ can be reached with far fewer samples
than the $O(\epsilon^{-2})$ that randomization demands. We differ from their work in two
respects that we regard as the main novelties here. First, they analyze \emph{pointwise /
functional} kernel approximation, whereas our bounds are stated directly in terms of the
\emph{spectrum} of the Gram matrix, and we bound the eigenvalue discrepancy $(\sum_k (\lambda_k - \widehat\lambda_k)^2)^{1/2}$
explicitly. Second, their constructions and their strongest guarantees lean on smoothness
of the kernel and its features; a substantial part of our contribution is to show that the
quadrature advantage persists even when the features are \emph{non-smooth}, including
arc-cosine / ReLU-type kernels~\cite{cho2009kernel} and neural-tangent-style kernels built
from Leaky ReLU and standard ReLU activations, where one would naively expect quadrature
to lose its edge.
 
\paragraph{\bf Memory-constrained approximation.}
Finally, the storage cost of explicit features is itself an active concern. Zhang, May,
Dao, and Ré~\cite{zhang2019lowprecision} address it from the \emph{precision} side,
quantizing random Fourier features to build a high-rank approximation under a fixed memory
budget. We address the same budget from the \emph{count} side: by using a structured
quadrature grid whose points are a Cartesian product across dimensions, we show the Fourier
features admit a tensorized storage scheme whose cost grows \emph{linearly} rather than
exponentially in the ambient dimension. The two approaches are complementary; one shrinks
the bits per feature, the other shrinks the number of features.
 
\subsection{Contributions}
 
This paper develops the quadrature view of kernel feature maps and establishes, in theory
and in experiment, that well-chosen quadrature points yield dramatically better spectral
approximation than random points at equal cost. Concretely:
 
\begin{enumerate}
  \item \textbf{A general spectral error bound for quadrature rules.} For any feature map
  $G$ and any quadrature measure $\nu$ that integrates a subspace $V$ exactly, we bound
  the error in every quadratic form of the Gram matrix, and hence the  spectral
  error  in terms of a quantity determined by the approximation error of $G$. This reduces the problem of spectral kernel
  approximation to a classical question of how well the features can be approximated by the
  integrated subspace, and in particular implies the operator inequality
  $K - \lambda I \ll \widehat{K} \ll K + \lambda I$ once the error is below
  a certain threshold. (See Section~\ref{bhag:mainresults} for details.)
 
  \item \textbf{Instantiation across several kernel families and domains.} We apply the
  general bound to concrete settings:
  Fourier features on $\mathbb{R}^q$, using Gauss-Hermite quadrature, where we adapt
  entire-function approximation estimates to handle the Gaussian weight; and the arc-cosine
  kernels of Cho and Saul~\cite{cho2009kernel} corresponding to the feature maps $G(w,x)=(w\cdot x)_+^\ell$, in both their $\ell = 0$ and $\ell = 1$
  forms. This shows that the framework is not tied to one kernel but covers Euclidean and
  spherical data and several neural-network-inspired kernels.
 
  \item \textbf{Robustness to non-smooth features.} We show, and verify
  empirically, that the spectral advantage of quadrature survives the loss of smoothness.
  For analytic activations (e.g.\ Leaky ReLU) the error decays exponentially in the number
  of points per dimension; for genuinely non-differentiable activations (standard ReLU) the
  decay is slower but still markedly faster than random features, whose error remains large
  and can even \emph{increase} as features are added.
 
  \item \textbf{Storage that is linear, not exponential, in dimension.} For Fourier features
  on a quadrature grid, the product structure of the points lets us store the feature tensor
  as an $M \times q \times n$ object and recover inner products as a product of
  per-coordinate sums. This reduces storage from the naive $O(M n^q)$, exponential in the
  ambient dimension $q$, to $O(M q n)$, making the small-but-expressive feature sets of the
  previous point genuinely practical to compute and store.
 
  \item \textbf{Empirical validation.} On Gaussian-kernel data in $\mathbb{R}^5$ and on
  neural tangent kernels on the sphere $\mathbb{S}^3$ built from Leaky ReLU and ReLU
  activations, we show the spectral convergence of quadrature features matches the theory
  and is exponentially faster than that of random features, often driving the spectral error
  to numerically zero with only a handful of points per dimension while random features
  remain far from a spectral guarantee.
\end{enumerate}
 
Together these results argue that, for the spectral approximation that kernel methods
actually require, the randomness in random features is not a feature but a cost, and one that
structured quadrature can avoid, simultaneously improving accuracy and shrinking the
features that must be stored.

The outline of the remainder of the paper is as follows. In Section~\ref{bhag:mainresults} we state and prove our main result, Theorem~\ref{theo:basictheo}.
The application of this result and numerical validation for Fourier features is discussed in Section~\ref{bhag:fourierfeatures}.
The application to neural tangent kernels based on sigmoidal and ReLU activation functions on the sphere is discussed in Section~\ref{bhag:arccossect} with numerical validation in Section~\ref{sec:numS3}.

\section{Main results}\label{bhag:mainresults}
\subsection{Set up}\label{bhag:setup}
Let $\WW$ be a topological space, $\mu^*$ be a $\sigma$-finite Borel measure supported on $\WW$. 
Let $\XX$ be (possibly different) nonempty set.
A \emph{kernel} on $\XX$ is a function $K:\XX\times\XX\to \RR$ which is  positive semi-definite in the sense that
\be\label{eq:posdef}
\int_\XX\int_\XX K(x,y)d\mu(x)d\mu(y) \ge 0
\ee
for all nonzero signed measures $\mu$ supported on $\XX$.
The \emph{kernel trick} is to find 
 $G:\WW\times\XX\to\CC$ such that $G(\circ, x)$ is $\mu^*$-measurable for all $x,y\in \XX$ and
\be\label{eq:Kkerndef}
K(x,y)=K(\mu^*;x,y)=\int_\WW G(w,x)\ol{G(w,y)}d\mu^*(w), \qquad x,y\in\XX.
\ee
In this paper, we assume that $G$ is known, and define $K$ by \eqref{eq:Kkerndef}.

Our goal is to approximate $K$ by
\be\label{eq:Kapprox}
K(\nu;x,y)=\int_\WW G(w,x)\ol{G(w,y)}d\nu(w), \qquad x,y\in\XX.
\ee
where $\nu$ is a finitely supported measure on $\WW$.
The total variation measure of $\nu$ is denoted by $|\nu|$.

We define
$$
\|f\|_1=\int_\WW |f(w)|d\mu^*(w), \mbox{  if  } f\in L^1(\mu^*), \qquad \|f\|_\infty= \sup_{w\in \WW}|f(w)|, \mbox{  if  } f\in C_0(\WW).
$$
For the  space $\mathfrak{X}=L^1(\mu^*)\bigcap C_0(\WW)$, we define
\be\label{eq:normdef}
\|f\|=\max(\|f\|_1,\|f\|_\infty), \qquad f\in \mathfrak{X}.
\ee
We assume that 
\be\label{eq:Gfamily}
\mathcal{F}=\{G(\circ,x) :x\in\XX\}\subset \mathfrak{X}
\ee
 is compact in $\mathfrak{X}$.

For a linear subspace $V\subset \mathfrak{X}$  we define the degree of approximation by
\be\label{eq:degapprox}
\mathsf{dist}(f, V)=\inf_{P\in V}\|f-P\|, \qquad f\in\mathfrak{X}.
\ee
Obviously, $\mathsf{dist}(f, V)\le \|f\|<\infty$.
Since we assume $\mathcal{F}$ to be compact, 
 there exist  $\epsilon(V)$ and $\tn G\tn$ such that
\be\label{eq:equiconvergence}
\sup_{x\in\XX}\|G(\circ,x)\|\le \tn G\tn,\quad \sup_{x\in\XX}\mathsf{dist}(G(\circ,x), V)\le \epsilon(V).
\ee

A measure $\nu$ supported on $\WW$ is called a (product) \emph{quadrature measure for $V$} if $\nu$ has a finite total variation $|\nu|$, and
\be\label{eq:absquadrature}
\int_\WW |P|^2d\nu=\int_\WW |P|^2d\mu^*, \qquad P\in V,
\ee
equivalently,
\be\label{eq:absquadraturealt}
\int_\WW P\ol{Q}d\nu=\int_\WW P\ol{Q}d\mu^*, \qquad P, Q\in V.
\ee
For example, if $\WW=[-1,1]$, $\mu^*$ is the Lebesgue measure, $n\ge 1$ and $V$ is space of all polynomials of degree $<n$, then the Legendre quadrature formula defines a measure $\nu$ which is a quadrature measure for $V$. 
In general, products of elements of $V$ are not members of $V$, and hence, it is not enough for our purpose to require as usual that $\int Pd\nu=\int Pd\mu^*$ for all $P\in V$.

\subsection{Main theorem}\label{bhag:maintheo}

\begin{theorem}
\label{theo:basictheo}
Let $M, N\ge 1$ be integers, $\nu$ be a quadrature measure for $V$ on $\WW$, with $\mathsf{supp}(\nu)=\{w_1,\cdots, w_N\}$. 
Let $x_1,\cdots, x_M\in\XX$, $\mathbb{K}=[K(x_k,x_j)]_{k,j=1}^M$,  $\widehat{\mathbb{K}}=[K(\nu;x_k,x_j)]_{k,j=1}^M$. Let $\sigma_1\ge \cdots \ge \sigma_M$ and $\tau_1\ge\cdots\ge \tau_M$ be the eigenvalues of $\mathbb{K}$ and $\widehat{\mathbb{K}}$ respectively. 
Let
\be\label{eq:epsilon_def}
\varepsilon=4(1+|\nu|)M\tn G\tn \epsilon(V).
\ee
Then
\be\label{eq:eigenvalueest}
\max_{1\le j\le M}|\sigma_j-\tau_j|\le \varepsilon.
\ee
In particular, the spectral norm of $\mathbb{K}-\widehat{\mathbb{K}}$ is $\le \varepsilon$. 
For $\Lambda \ge \varepsilon$
\be\label{eq:spectralest}
\mathbb{K}-\Lambda\mathbb{I} \ll \widehat{\mathbb{K}}\ll \mathbb{K}+\Lambda\mathbb{I},
\ee
where $\mathbb{I}$ is the identity matrix.
For any $\lambda>0$, $\Delta\ge \varepsilon/(\lambda+\sigma_M)$, 
\be\label{eq:goadestbis}
(1-\Delta)(\mathbb{K}+\lambda \mathbb{I})\ll \widehat{\mathbb{K}}+\lambda \mathbb{I} \ll (1+\Delta)(\mathbb{K}+\lambda \mathbb{I}).
\ee
\end{theorem}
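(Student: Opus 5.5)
We will first bound every quadratic form of $\mathbb{K}-\widehat{\mathbb{K}}$, then read off the eigenvalue and operator statements. Fix a unit vector $\mathbf{a}=(a_1,\dots,a_M)\in\CC^M$ and set $F(w)=\sum_k a_k G(w,x_k)$. Then
$$
\mathbf{a}^*\mathbb{K}\mathbf{a}=\int_\WW |F|^2\,d\mu^*,\qquad \mathbf{a}^*\widehat{\mathbb{K}}\mathbf{a}=\int_\WW |F|^2\,d\nu .
$$
(Up to the conjugation convention; the matrices are Hermitian, so it suffices to control these forms.) The key step is to compare both integrals with the corresponding integrals of $|P|^2$ for a suitable $P\in V$. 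For these, the quadrature property \eqref{eq:absquadrature} gives equality.

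For each $k$, choose $P_k\in V$ with $\|G(\circ,x_k)-P_k\|\le 2\epsilon(V)$; the factor $2$ absorbs the infimum and is harmless. Take $P_k$ also with $\|P_k\|\le \tn G\tn+2\epsilon(V)$, or simply note $\|P_k\|\le 2\tn G\tn$ if $\epsilon(V)\le\tn G\tn/2$. Otherwise the claimed bound is trivial, since $\varepsilon\ge 2M\tn G\tn^2$ dominates the entries of both matrices. I expect this last claim to need a small check: the entries of $\widehat{\mathbb{K}}$ are bounded by $|\nu|\tn G\tn^2$. Let $P=\sum_k a_kP_k$ and $E=F-P$. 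By Cauchy--Schwarz, $\sum|a_k|\le\sqrt M$, so $\|E\|\le 2\sqrt M\epsilon(V)$ and $\|F\|,\|P\|\le 2\sqrt M\tn G\tn$ (roughly).

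Now write $|F|^2-|P|^2=E\ol{F}+P\ol{E}$. The integral of this against $\mu^*$ is bounded by $\|E\|_\infty\|F\|_1+\|P\|_1\|E\|_\infty$, using the $\|\cdot\|_1$ half of the norm \eqref{eq:normdef}. Against $\nu$, the same difference is bounded by $|\nu|(\|E\|_\infty\|F\|_\infty+\|P\|_\infty\|E\|_\infty)$, using the sup half. Since $\int|P|^2d\nu=\int|P|^2d\mu^*$, the triangle inequality gives
$$
|\mathbf{a}^*(\mathbb{K}-\widehat{\mathbb{K}})\mathbf{a}|\le (1+|\nu|)\,\|E\|\,(\|F\|+\|P\|)\lesssim 4(1+|\nu|)M\tn G\tn\epsilon(V)=\varepsilon,
$$
after careful bookkeeping of constants. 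The bookkeeping (getting exactly the constant $4$) is the main technical nuisance. I would arrange it by choosing $P_k$ with $\|G(\circ,x_k)-P_k\|\le\epsilon(V)$ (or $\epsilon(V)+\eta$ and letting $\eta\to0$), which gives $\|E\|\le\sqrt M\epsilon(V)$. Then $\|F\|+\|P\|\le 2\|F\|+\|E\|\le 2\sqrt M\tn G\tn+\sqrt M\epsilon(V)\le 4\sqrt M\tn G\tn$, using $\epsilon(V)\le\tn G\tn$ (valid since $\mathsf{dist}\le\|f\|$).

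With $\sup_{\|\mathbf{a}\|=1}|\mathbf{a}^*(\mathbb{K}-\widehat{\mathbb{K}})\mathbf{a}|\le\varepsilon$, the spectral norm of the Hermitian matrix $\mathbb{K}-\widehat{\mathbb{K}}$ is at most $\varepsilon$. Weyl's inequality, or directly the Courant--Fischer min-max characterization, then yields \eqref{eq:eigenvalueest}. The quadratic form bound is exactly $-\varepsilon\mathbb{I}\ll\widehat{\mathbb{K}}-\mathbb{K}\ll\varepsilon\mathbb{I}$, which gives \eqref{eq:spectralest} for $\Lambda\ge\varepsilon$. Finally, $\mathbb{K}+\lambda\mathbb{I}\gg(\sigma_M+\lambda)\mathbb{I}$, so $\varepsilon\mathbb{I}\ll \frac{\varepsilon}{\lambda+\sigma_M}(\mathbb{K}+\lambda\mathbb{I})\ll\Delta(\mathbb{K}+\lambda\mathbb{I})$. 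Adding $\mathbb{K}+\lambda\mathbb{I}$ to both sides of the previous two-sided inequality then gives \eqref{eq:goadestbis}.
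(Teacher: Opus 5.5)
Your proof is correct and follows the paper's argument essentially step for step. The paper isolates your estimate of $\int|F|^2d\mu^*-\int|F|^2d\nu$ (via $|F|^2-|P|^2=E\ol{F}+P\ol{E}$ and quadrature exactness for $|P|^2$) as Lemma~\ref{lemma:prodquadlemma}, and your bound $\|E\|\le\sqrt{M}\,\epsilon(V)$ as Lemma~\ref{lemma:linearcomb}, and then concludes with the Rayleigh quotient and Weyl's inequality exactly as you do. The detours in your second paragraph (the factor $2$ and the ``trivial case'') are unnecessary once you take $P_k$ within $\mathsf{dist}(G(\circ,x_k),V)+\eta$ of $G(\circ,x_k)$; that choice also makes $\mathsf{dist}(G(\circ,x_k),V)\le\tn G\tn$ the inequality you actually use, rather than an assumption on $\epsilon(V)$.
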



In order to prove the theorem, we need to prove/recall a few lemmas.
The first lemma estimates the error in approximating $\int |f|^2d\mu^*$ by $\int |f|^2d\nu$.
\begin{lemma}\label{lemma:prodquadlemma}
If $\nu$ is a quadrature measure for $V$ and $f, g\in\mathfrak{X}$ then
\be\label{eq:prodquaderr}
\left|\int_\WW f\ol{g}d\mu^*-\int_\WW f\ol{g}d\nu\right|\le 2(1+|\nu|)\left(\|g\|\mathsf{dist}(f, V)+\|f\|\mathsf{dist}(g, V)\right).
\ee 
In particular, setting $f=g$,
\be\label{eq:squarequaderr}
\left|\int_\WW |f|^2d\mu^*-\int_\WW |f|^2d\nu\right|\le 4(1+|\nu|)\|f\|\mathsf{dist}(f, V).
\ee
\end{lemma}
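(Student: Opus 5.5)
Fix $\delta>0$ and choose $P,Q\in V$ with $\|f-P\|\le \mathsf{dist}(f,V)+\delta$ and $\|g-Q\|\le \mathsf{dist}(g,V)+\delta$. By the quadrature property \eqref{eq:absquadraturealt}, $\int P\ol{Q}\,d\mu^*=\int P\ol{Q}\,d\nu$. We may therefore subtract this zero quantity and split the error into terms, each carrying one approximation error factor.

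Write $f\ol g - P\ol Q=(f-P)\ol g+P\,\ol{(g-Q)}$. Then
\begin{align*}
\int_\WW f\ol g\,d\mu^*-\int_\WW f\ol g\,d\nu
&=\int_\WW (f-P)\ol g\,d\mu^*+\int_\WW P\,\ol{(g-Q)}\,d\mu^*\\
&\quad-\int_\WW (f-P)\ol g\,d\nu-\int_\WW P\,\ol{(g-Q)}\,d\nu .
\end{align*}
The $\mu^*$ integrals are bounded by H\"older with $L^1$ and $L^\infty$: $\left|\int (f-P)\ol g\,d\mu^*\right|\le \|f-P\|_1\|g\|_\infty\le \|f-P\|\,\|g\|$. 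The $\nu$ integrals use only sup norms: $\left|\int (f-P)\ol g\,d\nu\right|\le |\nu|\,\|f-P\|_\infty\|g\|_\infty\le |\nu|\,\|f-P\|\,\|g\|$. The first pair thus contributes at most $(1+|\nu|)\|g\|\,\|f-P\|$.

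For the second pair the factor $P$ appears, and $\|P\|\le \|f\|+\|f-P\|$. This is the only delicate point: it gives an extra quadratic term $\|f-P\|\,\|g-Q\|$. To absorb it into the stated constant $2$, I would use $\|f-P\|\le 2\|f\|$, which is valid since $\mathsf{dist}(f,V)\le\|f\|$ (take $P=0$) and $\delta$ may be taken small, or simply let $\delta\to0$ with $\|f-P\|\le \|f\|+\delta$. Then $\|P\|\le 2\|f\|+\delta$, and the second pair contributes at most $(1+|\nu|)(2\|f\|+\delta)\|g-Q\|$.

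Altogether this gives $(1+|\nu|)\bigl(\|g\|\,\|f-P\|+2\|f\|\,\|g-Q\|\bigr)+O(\delta)$. Letting $\delta\to0$, this is bounded by $2(1+|\nu|)\bigl(\|g\|\mathsf{dist}(f,V)+\|f\|\mathsf{dist}(g,V)\bigr)$, which is \eqref{eq:prodquaderr}. Setting $g=f$ yields \eqref{eq:squarequaderr}. The one non-mechanical step is controlling $\|P\|$ by $2\|f\|$, and that is what produces the factor $2$.
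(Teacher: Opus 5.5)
Your proof is correct and follows the paper's argument. You use the same decomposition $f\ol{g}-P\ol{Q}=(f-P)\ol{g}+P\,\ol{(g-Q)}$, the same bound $\|P\|\le 2\|f\|+\delta$, and $L^1$/sup-norm estimates for the $\mu^*$ and $\nu$ integrals respectively; the paper merely packages these through $\|fg\|\le\|f\|\|g\|$ before integrating. One slip: your aside ``$\|f-P\|\le 2\|f\|$'' is not what you need, since it would only give $\|P\|\le 3\|f\|$. Your alternative $\|f-P\|\le\|f\|+\delta$ is the right one, and it is the bound the paper uses.
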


\begin{proof}
We note first that for any $f, g\in \mathfrak{X}$, both $\|fg\|_1\le \|f\|_1\|g\|_\infty$ and $\|fg\|_\infty\le \|f\|_\infty\|g\|_\infty$, and hence, $\|fg\|\le \|f\|\|g\|$.
Let $f, g\in \mathfrak{X}$, $\delta>0$ be arbitrary, and $P, Q\in V$ be such that
$$
\|f-P\|\le \mathsf{dist}(f, V)+\delta, \qquad \|g-Q\|\le \mathsf{dist}(g, V)+\delta.
$$
Then 
$$
\|P\|\le \|f\|+\mathsf{dist}(f,V)+\delta \le 2\|f\|+\delta
$$
so that
\be\label{eq:pf1eqn1}
\begin{aligned}
\|f\ol{g}-P\ol{Q}\|&\le \|(f-P)g\|+\|P(g-Q)\|\le \|f-P\||\|g\|+\|P\|\|g-Q\|\\
&\le \|g\|(\mathsf{dist}(f, V)+\delta)+(2\|f\|+\delta)(\mathsf{dist}(g, V)+\delta).
\end{aligned}
\ee
In view of \eqref{eq:absquadraturealt}, we see that for any $\delta>0$,
$$
\begin{aligned}
\left|\int_\WW f\ol{g}d\mu^* -\int_\WW f\ol{g}d\nu\right| &=\left|\int_\WW (f\ol{g}-P\ol{Q})d\mu^* -\int_\WW (f\ol{g}-P\ol{Q})d\nu\right|\le \|f\ol{g}-P\ol{Q}\|_1+|\nu|\|f\ol{g}-P\ol{Q}\|_\infty \\ &\le 2(1+|\nu|)\bigg(\|g\|(\mathsf{dist}(f, V)+\delta)+\|f\|(\mathsf{dist}(g, V)+\delta)\bigg).
\end{aligned}
$$
\end{proof}

Our next lemma is a simple estimate on the degree of approximation of linear combinations of $f\in\mathfrak{X}$.

\begin{lemma}\label{lemma:linearcomb}
Let $M\ge 1$ be an integer, $f_1,\cdots, f_M \in \mathfrak{X}$, $a_1,\cdots, a_M\in\CC$,
$$
f=\sum_{\ell=1}^M a_\ell f_\ell, \qquad w\in \WW.
$$ 
Then
\be\label{eq:lin_comb_degree_approx}
\mathsf{dist}(f, V)\le \sqrt{M}\left\{\sum_{\ell=1}^M |a_\ell|^2\right\}^{1/2}\max_{1\le \ell \le M}\mathsf{dist}(f_\ell, V).
\ee
\end{lemma}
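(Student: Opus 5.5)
The statement to prove is Lemma~\ref{lemma:linearcomb}. The plan is to approximate each $f_\ell$ separately and then combine, using only the triangle inequality for the norm \eqref{eq:normdef} and the Cauchy--Schwarz inequality in $\CC^M$.

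Fix $\delta>0$. Since $V$ is a linear subspace, for each $\ell$ we can choose $P_\ell\in V$ with
$$
\|f_\ell-P_\ell\|\le \mathsf{dist}(f_\ell,V)+\delta .
$$
Put $P=\sum_{\ell=1}^M a_\ell P_\ell$. This lies in $V$ because $V$ is closed under linear combinations. By the definition \eqref{eq:degapprox},
$$
\mathsf{dist}(f,V)\le \|f-P\| .
$$

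The norm $\|\cdot\|=\max(\|\cdot\|_1,\|\cdot\|_\infty)$ is a genuine norm, being the maximum of two norms, so it is subadditive and absolutely homogeneous. Hence
$$
\|f-P\|\le \sum_{\ell=1}^M |a_\ell|\,\|f_\ell-P_\ell\|\le \Big(\sum_{\ell=1}^M|a_\ell|\Big)\Big(\max_{1\le \ell\le M}\mathsf{dist}(f_\ell,V)+\delta\Big).
$$
Cauchy--Schwarz gives
$$
\sum_{\ell=1}^M|a_\ell|\le \sqrt{M}\Big\{\sum_{\ell=1}^M|a_\ell|^2\Big\}^{1/2}.
$$

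Letting $\delta\to0$ yields \eqref{eq:lin_comb_degree_approx}. There is no real obstacle. The only points needing care are that the infimum may not be attained, which the $\delta$ handles, and that $\|\cdot\|$ satisfies the triangle inequality.
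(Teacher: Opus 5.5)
Your proposal is correct and follows essentially the same argument as the paper: near-best approximants $P_\ell$ with slack $\delta$, the combination $\sum_\ell a_\ell P_\ell\in V$, the triangle inequality for $\|\cdot\|$, Cauchy--Schwarz to pass from $\sum_\ell |a_\ell|$ to $\sqrt{M}\bigl\{\sum_\ell |a_\ell|^2\bigr\}^{1/2}$, and then $\delta\to 0$. One small wording fix: the existence of each $P_\ell$ comes from the definition of $\mathsf{dist}(f_\ell,V)$ as an infimum, not from $V$ being a linear subspace; linearity is what you need later to conclude $\sum_\ell a_\ell P_\ell\in V$.
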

\begin{proof}
Let $\delta>0$ be arbitrary. For $\ell=1,\cdots,M$, let  $P_\ell\in V$ satisfy $\|f_\ell-P_\ell\|\le \mathsf{dist}(f_\ell, V)+\delta$.
Then
$$
\begin{aligned}
\mathsf{dist}(f, V)&\le \left\|f-\sum_{\ell=1}^M a_\ell P_\ell\right\|\le \sum_{\ell=1}^M |a_\ell|\|f_\ell-P_\ell\|\le \max_{1\le \ell \le M}\left(\mathsf{dist}(f_\ell, V)+\delta\right)\sum_{\ell=1}^M |a_\ell|\\
&\le \sqrt{M}\left\{\sum_{\ell=1}^M |a_\ell|^2\right\}^{1/2}
\max_{1\le \ell \le M}
\left(\mathsf{dist}(f_\ell, V)+\delta\right).
\end{aligned}
$$
\end{proof}

The final ingredient in our proof of Theorem~\ref{theo:basictheo} is the Weyl's inequality (\cite[Exercise before Theorem~6.3.5, p.~367]{horn_johnson_book})

\begin{prop}\label{prop:wielandt}
Let $M\ge 1$ be an integer, $A, B$ be $M\times M$ symmetric matrices, and $\sigma_1\ge \sigma_2\ge\cdots\sigma_M$, $\tau_1\ge \tau_2\ge\cdots\tau_N$ be the eigenvalues  of $A$ and $B$ respectively. 
Let $L$ be the spectral norm of $A-B$. 
Then
\be\label{eq:wieldandt}
\max_{1\le j\le M}|\sigma_j-\tau_j|\le L.
\ee
\end{prop}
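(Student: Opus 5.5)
We prove Weyl's inequality directly from the Courant--Fischer min--max characterization of eigenvalues. (In the statement, the last eigenvalue of $B$ should read $\tau_M$, since $B$ is $M\times M$.) We argue for real symmetric matrices; the same argument works verbatim for Hermitian matrices, which is the case actually needed for $\mathbb{K}$ and $\widehat{\mathbb{K}}$ when $G$ is complex valued. The idea is that perturbing $A$ by a symmetric matrix of spectral norm $L$ changes every Rayleigh quotient by at most $L$, and the min--max formula carries this uniform bound over to each ordered eigenvalue.

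First, recall Courant--Fischer: for $1\le j\le M$,
$$
\sigma_j=\max_{\dim S=j}\ \min_{x\in S,\ |x|=1} x^*Ax, \qquad \tau_j=\max_{\dim S=j}\ \min_{x\in S,\ |x|=1} x^*Bx,
$$
where $S$ ranges over $j$-dimensional subspaces of $\CC^M$. Next, since $B-A$ is symmetric (or Hermitian), the Cauchy--Schwarz inequality gives, for every unit vector $x$,
$$
|x^*(B-A)x|\le |x|\,|(B-A)x|\le L .
$$
Hence $x^*Bx\ge x^*Ax-L$ for every unit vector $x$.

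Now fix a $j$-dimensional subspace $S_0$ at which the maximum defining $\sigma_j$ is attained. Then
$$
\tau_j\ \ge\ \min_{x\in S_0,\,|x|=1}x^*Bx\ \ge\ \min_{x\in S_0,\,|x|=1}x^*Ax-L\ =\ \sigma_j-L .
$$
Exchanging the roles of $A$ and $B$, which is legitimate because $\|A-B\|=\|B-A\|=L$, gives $\sigma_j\ge \tau_j-L$. Together the two bounds give $|\sigma_j-\tau_j|\le L$ for every $j$, which is \eqref{eq:wieldandt}.

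The only nontrivial ingredient is the min--max theorem itself. If a self-contained argument is preferred, it follows from the spectral theorem. Let $u_1,\dots,u_M$ be orthonormal eigenvectors of $A$ with eigenvalues $\sigma_1\ge\cdots\ge\sigma_M$. Taking $S=\mathrm{span}\{u_1,\dots,u_j\}$ shows that the max-min is at least $\sigma_j$. For the reverse inequality, any $j$-dimensional $S$ has dimension count $j+(M-j+1)>M$ against $\mathrm{span}\{u_j,\dots,u_M\}$, so the two subspaces share a unit vector $x$, and for that vector $x^*Ax\le\sigma_j$. We regard this as standard and would simply cite \cite{horn_johnson_book} for it.
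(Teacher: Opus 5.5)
Your proof is correct and is the standard Courant--Fischer argument for Weyl's perturbation inequality; the paper gives no proof of its own and simply cites \cite{horn_johnson_book}, where the result is obtained the same way. Your side remarks are also apt: the last eigenvalue of $B$ should be $\tau_M$, and since $G$ may be complex valued, $\mathbb{K}$ and $\widehat{\mathbb{K}}$ are in general Hermitian rather than real symmetric, a case your argument covers verbatim.
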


\noindent
\textit{Proof of Theorem~\ref{theo:basictheo}.}\\

Let $a_1,\cdots, a_M$ be any complex numbers with $\sum_{\ell=1}^M |a_\ell|^2 =1$, and $f=\sum_{\ell=1}^M a_\ell G(\circ, x_\ell)$.
Then Lemma~\ref{lemma:linearcomb} and \eqref{eq:equiconvergence} yield
$$
\mathsf{dist}(f, V)\le \sqrt{M}\epsilon(V).
$$
Moreover, \eqref{eq:equiconvergence} shows that 
$$\|f\|\le \sum_{\ell=1}^M |a_\ell|\tn G\tn \le \sqrt{M}\tn G\tn.$$
Therefore, \eqref{eq:squarequaderr}   leads to
\be\label{eq:pf2eqn1}
\left|\int_\WW |f|^2d\mu^*-\int_\WW |f|^2d\nu\right|\le 4(1+|\nu|)M\tn G\tn \epsilon(V).
\ee
If $\mu$ denotes either $\mu^*$ or $\nu$, then
$$
\int_W |f|^2d\mu =\sum_{k,j=1}^M a_k\ol{a_j}\int_\WW G(w, x_k)\ol{G(w,x_j)}d\mu(w)=\sum_{k,j=1}^M a_k\ol{a_j} K(\mu;x_k,x_j).
$$
So, \eqref{eq:pf2eqn1} translates into
$$
\sum_{k,j=1}^M a_k\ol{a_j} \left\{K(\mu^*;x_k,x_j)-K(\nu;x_k,x_j)\right\}\le 4(1+|\nu|)M\tn G\tn \epsilon(V).
$$
Thus, the Rayleigh theorem shows that  the spectral norm of $\mathbb{K}-\widehat{\mathbb{K}}$ is bounded above by $4(1+|\nu|)M\tn G\tn \epsilon(V)$.
The estimate \eqref{eq:eigenvalueest} now follows from Proposition~\ref{prop:wielandt}.
The other statements in Theorem~\ref{theo:basictheo} are easy to deduce from \eqref{eq:eigenvalueest}.
\qed

\section{Fourier features}\label{bhag:fourierfeatures}
\subsection{Approximation estimates}\label{bhag:hermiteapprox}
We consider $\WW=\RR^q$, $\mu^*$ to be the Lebesgue measure on $\WW$, $\XX=[-\tau,\tau]^q$ for some $\tau>0$, and
$G(\w,\x)=\exp(i\w\cdot\x)\exp(-|\w|^2/2)$, 
$\w\in\WW$, $\x\in\XX$. 
For $n\ge 1$, we denote by $\mathbb{P}_n^q$ the space of all $q$ variate algebraic polynomials of coordinatewise degree $<n$, and consider
$$
V=\Pi_n^q=\{ \w\mapsto P(\w)\exp(-|\w|^2/2) :  P\in \mathbb{P}_n^q\}.
$$
When $q=1$, we omit the superscript $q$ from these notations.

The orthonormalized Hermite polynomial $h_k$ of degree $k$ is defined recursively by
\bea\label{eq:recurrence}
h_k(x)&:=&\sqrt{\frac{2}{k}}xh_{k-1}(x)-\sqrt{\frac{k-1}{k}}h_{k-2}(x), \qquad k=2,3,\cdots,
\nonumber\\
&&h_0(x):=\pi^{-1/4},\ h_1(t):=\sqrt{2}\pi^{-1/4}x.
\eea
We write 
\be\label{eq:uni_psi_def}
\psi_k(t):=h_k(t)\exp(-t^2/2), \qquad t\in\RR,\ k\in\ZZ_+.
\ee 
The functions $\{\psi_k\}_{k=0}^\infty$ are an orthonormal set on $\RR$ with respect to the Lebesgue measure, and $\Pi_n=\mathsf{span}\{\psi_k : k<n\}$.

 The Hermite polynomial $h_m$ has $m$ real and simple zeros $x_{k,m}$. Writing
\be\label{eq:coatesnum}
 \lambda_{k,m}:=\left(\sum_{j=0}^{m-1} h_j(x_{k,m})^2\right)^{-1},
\ee
it is well known (cf. \cite[Section~3.4]{szego}) that
\be\label{eq:uniquad}
\sum_{k=1}^m \lambda_{k,m}P(x_{k,m})= \int_\RR P(w)\exp(-w^2)dw, \qquad P\in \mathbb{P}_{2m}.
\ee
Thus, the measure $\nu_m$ that associates the mass $\lambda_{k,m}\exp(x_{k,m}^2)$ with each $x_{k,m}$ is a quadrature measure for $\Pi_m$.

It is also known (cf. \cite[Theorem~8.2.7]{mhasbk}, applied with $p=2$, $b=0$) that
\be\label{eq:wtvarbd}
|\nu_m|=\sum_{k=1}^m \lambda_{k,m}\exp(x_{k,m}^2) \ls m^{1/2}.
\ee
For $f\in L^1(\RR)$, $n\ge 1$, we define
\be\label{eq:hermitexp}
\begin{aligned}
\hat{f}(k)&=\int f(y)\psi_k(y)dy, \qquad k=0,1, \cdots,\\
s_n(f)(x)&=\sum_{k=0}^{n-1} \hat{f}(k)\psi_k(x).
\end{aligned} 
\ee
During the proof of \cite[Theorem~3.2]{bitrep} (see the first displayed equation on p.~474), we have proved that if $f$ is an entire function of finite exponential type $\le \tau$, then with $g(w)=f(w)\exp(-w^2/2)$, we have for every $p\in [1,\infty]$,
$$
\|g-s_n(g)\|_p \ls \tau^n/\sqrt{n!}.
$$
(In \cite{bitrep}, we need to take $\alpha=2$, $\lambda=1$, so that $\rho$ as defined in \cite[Eqn. (2.5)]{bitrep} is $1$. The term $M_n/2^n\ls 1$, since $M_n^{1/n}\to 1$ as $n\to\infty$.)
Applying this result with $g(w)=g_x(w)=\exp(iwx)\exp(-w^2/2)$, $|x|\le\tau$, we see that
$$
\sup_{g_x}\inf_{P\in\Pi_n}\{\|g-P\|_1+\|g-P\|_\infty\}\ls \tau^n/\sqrt{n!}.
$$
Using a simple tensor product argument as in \eqref{eq:pf1eqn1}, we conclude that
\be\label{eq:epsilonbdwt}
\epsilon(\Pi_n^q)\ls \tau^n/\sqrt{n!}.
\ee

\subsection{Efficient Storage and Computation of Quadrature Fourier Kernel}
In the instance that $\WW=\XX=\RR^q$, $G(\w,\x)=\exp{(i \w\cdot\x-\|\w\|^2/2)}$, the storage complexity of the $n^q$ quadrature Fourier features can be vastly reduced by using tensors.   Let $\x_{j,k}$ be the $k^{th}$ feature of the $j^{th}$ point, and let $\w_{\ell,k}$ be the $\ell^{th}$ quadrature point in the $k^{th}$ feature.  The benefit of this framework is that $\WW$ lies on a grid, so a particular quadrature point can be decomposed into the Cartesian product of the quadrature points in each feature.  Given this, we can define
\begin{align*}
\Phi_{j,k,\ell} = \frac{\sqrt{a_\ell}}{\pi^{1/4}}\textnormal{exp}\left( i x_{j,k}  w_{\ell,k} -w_{\ell,k}^2/2\right), \qquad j\in\{1,...,M\}, \, k\in\{1, ..., q\}, \, \ell\in\{1, ...,n\},
\end{align*}
where $a_{\ell}$, for $\ell=1,\ldots,n$, are Gauss-Hermite quadrature weights.
A simple observation of the exponential yields that $\Phi$ can be stored simply as a $M\times q \times n$ tensor $\Phi_{j,k,\ell} = \frac{a_\ell}{\pi^{1/2}}\textnormal{exp}\left( i x_{j,k}  w_{\ell,k} -w_{\ell,k}^2/2\right)$, since
\begin{align*}
\langle \Phi_{j,\cdot,\cdot}, \Phi_{j',\cdot,\cdot} \rangle  = \prod_k \sum_\ell \frac{a_\ell}{\pi^{1/2}}\textnormal{exp}\left( i x_{j,k}  w_{\ell,k} -w_{\ell,k}^2/2\right) \textnormal{exp}\left( - i x_{j',k}  w_{\ell,k} -w_{\ell,k}^2/2\right)
\end{align*}
The benefit of this is that the storage complexity of ${\mathcal O}(Mqn)$ is significantly better than the naive RFF storage of $\mathcal{O}(Mn^q)$ that does not exploit the grid structure of the weights.  

\subsection{Experimental Validation}

We examine the spectral approximation error as a function of the number of features kept, comparing between the quadrature kernel and a random Fourier feature kernel.  For each, generate $n^q$ features where $n\in [2,15]$.  For the quadrature points, this is $n$ quadrature points per dimension.  

\begin{table}[]
    \centering
    \begin{tabular}{|c|c|c|c|}
    \hline
$n$ & $n^q$ &  $F_M$ & $\tilde{F}_M$   \\ 
\hline
%
4 & 1024 & 5.656525e-02 & 6.583937e+00 \\
5 & 3125 & 2.326465e-03 & 3.529518e+00 \\
6 & 7776 & 8.267983e-05 & 2.180179e+00 \\
7 & 16807 & 2.578723e-06 & 1.520654e+00 \\
8 & 32768 & 7.155622e-08 & 1.003574e+00 \\
9 & 59049 & 1.786809e-09 & 7.998515e-01 \\
10 & 100000 & 4.054818e-11 & 6.485978e-01 \\
11 & 161051 & 1.914069e-12 & 4.963733e-01 \\
12 & 248832 & 2.291393e-12 & 4.029682e-01 \\
13 & 371293 & 2.768165e-13 & 3.298760e-01 \\
\hline
\end{tabular}
\caption{The eigenvalue discrepancies with $M=1000$ and  various numbers of quadrature points. The random experiments were repeated $50$ times and $\tilde{F}_M$ is the averaged value.}
    \label{tab:Gaussian M1000 RFF quad}
\end{table}

\begin{table}[]
    \centering
    \begin{tabular}{|c|c|c|}
    \hline
    $M$ & $F_M$ & $\tilde{F}_M$ \\
    \hline
 125    &  1.409252e-13 & 6.267601e-02\\
 250    &  3.615065e-13 & 1.209672e-01\\
 500    &  9.514168e-13 & 2.323356e-01\\
 1000   &  1.132142e-12 & 4.864472e-01\\
  \hline
\end{tabular}
\caption{Eigenvalue discrepancies with $n=11$ (hence $n^q = 161051$) with different values of $M$. The value $\tilde{F}_M$ is averaged over $50$ trials.}
\label{tab:Gaussian different M}
\end{table}
We define the matrices $\mathbb{K} = [K(\bsx_k,\bsx_j)]_{k,j=1}^M$,
with $K$ being the Gaussian kernel with bandwidth $\sigma$, that
is
\[
K(\bsx, \bsy) = \exp \left(-\frac{\|\bsx-\bsy\|^2}{2\sigma^2}\right).
\]
In the numerical experiments, we set the bandwidth $\sigma=1$, and take $q=5$.
Computationally, we use tensor Gauss-Hermite quadrature on $\mathbb{R}^q$, see
\cite{gautschi2004orthogonal,olver2010quadrature}, where each quadrature points on each dimension is scaled by a factor of $\sqrt{2}/\sigma$, and the weights adjusted accordingly. 
Equivalently with the theory presented in Section~\ref{bhag:hermiteapprox}, we use the kernel $G(\w,\x)=\exp(-i\w\cdot\x)$ and absorb the Hermite weight in $\mu^*$, so that the integral is with respect to $\exp(-\|\w\|^2)$
rather than with respect to the Lebesgue measure. With these choices, 
\[
\begin{aligned}
K(\bsx,\bsy)&=\exp\left(-\frac{\|\bsx - \bsy\|^2}{2\sigma^2}\right)=\frac{1}{\pi^{q/2}}\int_{\mathbb{R}^q} \exp\left( i \frac{\sqrt{2}}{\sigma} \bsw \cdot (\bsx - \bsy) \right) e^{-|\bsw|^2} \, d\bsw\\
&=  \frac{1}{\pi^{q/2}}\int_{\mathbb{R}^q} G(\w,\x)\ol{G(\w,\y)}e^{-|\bsw|^2} \, d\bsw, \qquad \bsx,\bsy\in\RR^q.
\end{aligned}
\]
We also define $\widehat{\mathbb{K}} = [K(\nu;\bsx_k,\bsx_j)]_{k,j=1}^M$,
with
\[
K(\nu;\bsx,\bsy) = \frac{1}{\pi^{q/2}}\int_{\R^q} G(\bsw,\bsx)\overline{G(\bsw,\bsy)} d\nu(\bsw),
 \qquad \bsx, \bsy \in \R^q,
\]
Finally, the matrix $\widetilde{\mathbb{K}} = [K(\mu;\bsx_k,\bsx_j)]_{k,j=1}^M$ is defined with 
\[
K(\mu;\bsx,\bsy) = \int_{\R^q} G(\bsw,\bsx)\overline{G(\bsw,\bsy)} d\mu(\bsw),
\quad \bsx, \bsy \in \R^q.
\]
where $\mu$ is a discrete measure supported on a random set of $N=n^q$ points. Let $\lambda_1,\ldots,\lambda_M$ be the eigenvalues of the matrix $\mathbb{K}$, $\widehat{\lambda}_1,\ldots,\widehat{\lambda}_M$ be the eigenvalues of the matrix $\widehat{\mathbb{K}}$, and let
$\widetilde{\lambda}_1,\ldots,\widetilde{\lambda}_M$ be the eigenvalues of the matrix $\widetilde{\mathbb{K}}$, all arranged in a decreasing order.
Let the eigenvalue discrepancies be defined by
\[
F_M = \left( \sum_{k=1}^M |\lambda_k-\widehat{\lambda}_k|^2\right)^{1/2},\quad
\widetilde{F}_M = \left( \sum_{k=1}^M |\lambda_k-\widetilde{\lambda}_k|^2\right)^{1/2}.
\]
We note that, the Hoffman--Wielandt inequality for symmetric matrices \cite{HoffmanWielandt1953} gives
\[
F_{M} \le \| \mathbb{K} - \widehat{\mathbb{K}} \|_{\rm F},\quad
\widetilde{F}_{M} \le \| \mathbb{K} - \widetilde{\mathbb{K}} \|_{\rm F},
\]
where $\|\cdot\|_{\rm F}$ denotes the Frobenius norm of a matrix.
Table~\ref{tab:Gaussian M1000 RFF quad} examines the eigenvalue discrepancies with a fixed number of data points $M$ and various numbers of quadrature points while in Table~\ref{tab:Gaussian different M} the number of data points varies when the number of quadrature points is fixed.

Another method of measuring error is to examine the eigenvalues of the matrix 
\begin{align*}
\widehat{\mathbb{K}} + \lambda I - (1-\Delta) (\mathbb{K} + \lambda \mathbb{I})
\end{align*}
Let $\lambda_j$ be the $j^{th}$ eigenvalue, we compute $S_{\textnormal{lower}} = \sum_j |\lambda_j| \mathsf{1}_{\lambda_j < 0}$.  We equivalently compute $S_{\textnormal{upper}}$ from the matrix $(1+\Delta) (\mathbb{K} + \lambda I) - (\widehat{\mathbb{K}} + \lambda I)$, and a final score $S_M = S_{\textnormal{lower}} +S_{\textnormal{upper}}$.  This gives a score of how far away the associated differences are from being positive semi-definite.  

Figure \ref{fig:gaussianA} examines the plot of the number of quadrature points $N$ against $S_M$ for $M=1000$ points sampled from Gaussian in $q=5$.  Here the true kernel is a Gaussian with bandwidth $\sigma=1$, and both the quadrature and random features $w_j$ are used to create a feature space $z(x) = [e^{i \langle w_j,x\rangle}]_{j=1}^{n^q}$.
 As is clear, the quadrature spectral error decays exponentially fast for almost any $(\lambda, \Delta)$ and often is exactly 0 by $n=10$.  This is compared to random Fourier features, whose error is still quite large by $n=15$ regardless of $(\lambda, \Delta)$.

\begin{figure}
\centering
\includegraphics[width=0.8\textwidth]{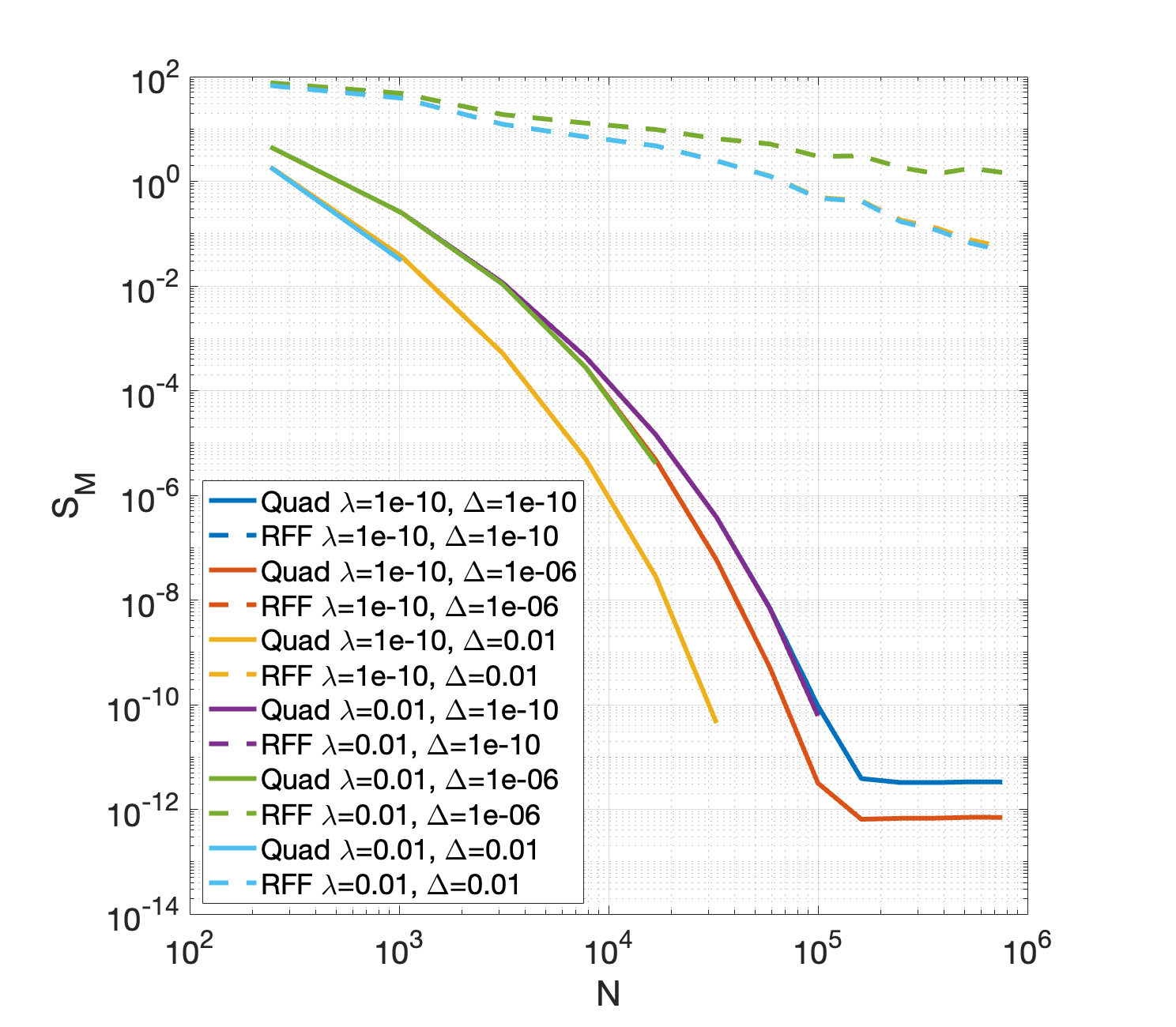}
\caption{Spectral scores for kernel built on $M=1000$ points sampled from Gaussian in $q=5$ dimension.  Compares quadrature features vs random Fourier features.  Activation is complex exponential, comparison to true kernel. Some curves stop short due to their remaining values being identically zero.}\label{fig:gaussianA}
\end{figure}

\section{The arc-cosine kernel on the unit sphere}\label{bhag:arccossect}
\subsection{Definitions}\label{bhag:arccoskerndef}
The arc-cosine kernel, see \cite{Cho2009KernelMF}, is described below.
Let $\bsx, \bsy \in \R^{q+1}$ and
let $(z)_{+} = z$ if $z \ge 0$ and $0$ otherwise. 
In the special case, $(z)^0_{+} = 1$ if $z >0$ and $0$ otherwise. The $\ell$th order arc-cosine kernel function is defined by
\[
k_\ell(\bsx,\bsy) = 
\frac{2}{(2\pi)^{(q+1)/2}} 
\int_{\R^{q+1}}
(\bsw \cdot \bsx)_{+}^\ell (\bsw \cdot \bsy)_{+}^\ell  e^{-\|\bsw|^2/2}d \bsw
\]
The kernel $k_\ell$ has an explicit form, which is given by
\[
k_\ell(\bsx,\bsy) = \frac{1}{\pi} \|\bsx\|^\ell
\|\bsy\|^\ell J_\ell(\theta),
\text{ with }
\cos \theta = \frac{\bsx \cdot \bsy}{ \|\bsx\| \|\bsy\|},
\]
and the function $J_\ell(\theta)$ is given by
\[
J_\ell(\theta) = (-1)^\ell (\sin\theta)^{2\ell+1}
\left(\frac{1}{\sin\theta} \frac{\partial}{\partial\theta} \right)^\ell
\left(\frac{\pi-\theta}{\sin\theta} \right).
\]
A few interesting cases are
\begin{align}
   J_0(\theta) &= \pi - \theta\\
   J_1(\theta) &= \sin\theta + (\pi-\theta)\cos\theta.
\end{align}
If we restrict $\bsx, \bsy$ to the unit sphere $\bS^q$ and let $\bsw = r \hat{\bsw}$, using the identity
\[
\int_0^\infty r^{2\ell+q} e^{-r^2/2}dr
= 2^{\ell+(q-1)/2} \Gamma\left(\ell + \frac{(q+1)}{2}\right),
\]
we obtain
\begin{align}
K_\ell(\bsx,\bsy) &= \frac{2}{(2\pi)^{(q+1)/2}}\int_0^\infty
\int_{\bS^q} (\hat{\bsw} \cdot \bsx)_{+}^\ell(\hat{\bsw} \cdot \bsy)_{+}^\ell
r^{2\ell+q} e^{-r^2/2} d\mu_q^*(\hat{\bsw}) dr \nonumber\\
&=  \frac{2}{(2\pi)^{(q+1)/2}} 
2^{\ell+(q-1)/2} \Gamma\left(\ell + \frac{(q+1)}{2}\right)
\int_{\bS^q} (\hat{\bsw} \cdot \bsx)_{+}^\ell (\hat{\bsw} \cdot \bsy)_{+}^\ell d\mu_q^*(\hat{\bsw})\nonumber\\
&=  C_{q,\ell} \int_{\bS^q} (\hat{\bsw} \cdot \bsx)_{+}^\ell (\hat{\bsw} \cdot \bsy)_{+}^\ell d\mu_q^*(\hat{\bsw}),\label{eqn:K-ell}
\end{align}
where $\mu_q^*$ denotes the volume measure of $\SS^q$, and
\[
C_{q,\ell} = \frac{2^\ell}{\pi^{(q+1)/2}} 
\Gamma \left(\ell + \frac{(q+1)}{2}\right).
\]
Suppose $\nu$ is a discrete measure on $\bS^q$, we approximate $K_\ell$ by
\begin{equation}
  K_\ell(\nu; \bsx, \bsy)
  = C_{q,\ell} \int_{\bS^q} 
  (\hat{\bsw} \cdot \bsx)_{+}^\ell (\hat{\bsw} \cdot \bsy)_{+}^\ell d\nu(\hat{\bsw}),
  \quad \bsx, \bsy \in \bS^q,
\end{equation}
where $\nu$ is a finitely supported measure
on $\bS^q$ with cardinality $N=\#\text{supp}(\nu)$.
\subsection{Quadrature formulas}\label{bhag:sphquadrature}

A point $\bsx = (x_1, x_2, \ldots, x_{q+1}) \in \bS^q$ is parameterised by $q$ angles \newline
$(\theta_1, \theta_2, \ldots, \theta_q)$ with domain $\theta_1 \in [0, 2\pi)$ and $\theta_m \in [0, \pi]$ for $m = 2, \ldots, q$:
\begin{equation}\label{eq:sphcoords}
\begin{aligned}
 x_1 &= \sin(\theta_1) \prod_{m=2}^q \sin(\theta_m), \\
 x_2 &= \cos(\theta_1) \prod_{m=2}^q \sin(\theta_m), \\
 x_m &= \cos(\theta_{m-1}) \prod_{l=m}^q \sin(\theta_l), \quad \text{for } 3 \le m \le q+1,
\end{aligned}
\end{equation}
where by convention $\prod_{l=q+1}^q \sin(\theta_l) \equiv 1$.
More succinctly, any point on $\SS^q$ ($q\ge 2$) can be written in the form $\x=(\sin\theta \omega, \cos\theta)$, where $\omega\in \SS^{q-1}$, $\theta\in [0,\pi]$. 
In \cite{stroud1971approximate}, it is proposed to construct a quadrature formula on the sphere $\SS^q$ using the quadrature formulas for different ultraspherical (Gegenbauer) weights for different dimensions. 
We propose a similar construction, except that we use only two quadrature formulas after the initial FFT along $\theta_1$, denoted by $\nu_{n,1}$.

It is convenient to express our quadrature formula using a measure notation. 
Let $\nu_{n,q}$ be a discrete measure on $\SS^q$ that integrates spherical polynomials of degree $<2n$ exactly, $\nu_n^L$ (resp. $\nu_n^T$) be the quadrature measure that integrates algebraic polynomials of degree $<2n$ on $[-1,1]$ with respect to the Lebesgue measure (resp. Chebyshev weight).
Thus, $\nu_n^L$ is supported on the zeros of degree $n$ Legendre polynomials and $\nu_n^T$ is supported on the zeros of degree $n$ Chebyshev polynomials of first kind.

A spherical polynomial of degree $<2n$ has the form
$$
P(\x)=\sum_{0\le \ell <2n}\sin^\ell\theta Q_{2n-\ell}(\cos\theta)Y_\ell(\omega),
$$
where $Q_{2n-\ell}$ is a univariate algebraic polynomial of degree $<2n-\ell$ and $Y_\ell $ is a homogeneous harmonic spherical polynomial of degree $\ell$.
In particular, if $\mu_q^*$ is the volume measure on $\SS^q$, then
\be\label{eq:exactint}
\begin{aligned}
\int_{\SS^q} P(\x)d\mu_q^*(\x)&=\int_{0}^\pi Q_{2n-\ell}(\cos\theta)\sin^{\ell+q-1}(\theta)\int_{\SS^{q-1}}Y_\ell(\omega)d\mu_{q-1}^*(\omega)\\
&=\int_{0}^\pi Q_{2n}(\cos\theta)\sin^{q-1}(\theta)d\theta=\int_{-1}^1 Q_{2n}(x)(1-x^2)^{q/2-1}dx.
\end{aligned}
\ee
If $q$ is even ($\ge 2$), then $(1-x^2)^{q/2-1}$ is a polynomial of degree $q-2$, and hence,
$$
\int_{-1}^1 Q_{2n}(x)(1-x^2)^{q/2-1}dx=\int_{-1}^1 Q_{2n}(x)(1-x^2)^{q/2-1}d\nu_{n+q/2-1}^L(x).
$$
Otherwise, 
$$
(1-x^2)^{q/2-1}=(1-x^2)^{q/2-1/2}(1-x^2)^{-1/2},
$$
where $(1-x^2)^{q/2-1/2}$ is a polynomial of degree $q-1$, and
$$
\int_{-1}^1 Q_{2n}(x)(1-x^2)^{q/2-1}dx=\int_{-1}^1 Q_{2n}(x)(1-x^2)^{q/2-1/2}d\nu_{n+(q-1)/2}^T(x).
$$
Since
$$
\int_{\SS^{q-1}}Y_\ell(\omega)d\mu_{q-1}^*(\omega)=\int_{\SS^{q-1}} Y_\ell(\omega)d\nu_{n,q-1}(\omega), \qquad \ell=0,\cdots, n-1,
$$
we arrive at our quadrature measure $\nu_{n,q}$:
$$
\nu_{n,q}=\begin{cases}
\nu_{n, q-1}\nu_{n+q/2-1}^L, &\mbox{ if $q$ is even},\\
\nu_{n, q-1}\nu_{n+q/2-1/2}^T, &\mbox{ if $q$ is odd}.
\end{cases}
$$

For the convenience of the reader, we elaborate further in the case of $\SS^3$.
A parametrisation of a point $\bsx \in \bS^3$ is given by
\begin{align*}
 x_1 &= \sin(\theta_1) \sin(\theta_2) \sin(\theta_3),\\
 x_2 &= \cos(\theta_1) \sin(\theta_2) \sin(\theta_3),\\
 x_3 &= \cos(\theta_2) \sin(\theta_3),\\ 
 x_4 &= \cos(\theta_3), \quad \theta_1\in [0,2\pi), \; \theta_2 \in [0,\pi], \; \theta_3 \in [0,\pi]
\end{align*} 
The surface measure on $\bS^3$ is
\[
   dS = \sin^2(\theta_3)\sin(\theta_2) d\theta_3 d\theta_2 d\theta_1.
\]
Let $y = \cos(\theta_3)$, so $y \in  [-1,1]$ and 
\[     
dy = -\sin(\theta_3) d\theta_3  = -\sqrt{1-y^2} d\theta_3
\]
Thus
\begin{align*}     
   dS &= (1-y^2)\sin(\theta_2) \frac{(-dy)}{\sqrt{1-y^2}} d\theta_2 d\theta_1
\end{align*}
Now let $z = \cos(\theta_2)$, then
$dz = -\sin(\theta_2) d\theta_2$.
So the surface measure of $\bS^3$ in $\R^4$
\[
    dS = (1-y^2) \frac{dy}{\sqrt{1-y^2}} dz d\theta_1
\]
Choose $y_k = \cos(\theta_{3,k})$ to be the Chebyshev points of degree $n+2$, that is
\[
    \theta_{3,k}  = \frac{(2k-1)\pi}  {2(n+2)}: k=1,...,n+2.
 \]   
Let $\{z_j=\cos(\theta_{2,j}), w_j \}$ for $j=1,\ldots,n$ be the zeros of Gauss-Legendre polynomials of degree $n$ and the corresponding Gaussian weights and let $\theta_{1,i} = 2\pi i /(2n)$ for $i=0,\ldots,2n-1$ which are equally spaced on $[0, 2\pi]$.

The quadrature $Q_n$ for a continuous function $f \in C(\bS^3)$ is defined by
\be\label{eq:explicit_s3_quadrature}
 Q_n(f) = \sum_{i=0}^{2n-1} \sum_{j=1}^n \sum_{k=1}^{n+2} w_{i,j,k} f(\bsy_{i,j,k}),
\ee
where, with
\begin{align*}
\bsy_{i,j,k} =&(\sin \theta_{1,i} \sin \theta_{2,j} \sin \theta_{3,k}, \\
               &      \cos \theta_{1,i} \sin \theta_{2,j} \sin \theta_{3,k},\\
               &      \cos \theta_{2,j} \sin\theta_{3,k},\\
               &      \cos\theta_{3,k}), 
            \quad\qquad i,j,k = 1,\ldots,n,
\end{align*}
we have
\be\label{eq:def:Qn_on_Sq}
w_{i,j,k} = w_{j} \frac{\pi}{(n+2)}  \frac{\pi}{n} \sin^2\theta_{3,k}.
\ee
We have
\begin{equation}\label{eq:sumw}
	\sum_{i,j,k} w_{i,j,k} = 2 \pi^2 = |\bS^3|.	
\end{equation}
Let $\nu$ be the finite measure on $\bS^3$ defined by the quadrature $Q_n$, that is
\[
\int_{\bS^3} f(\bsx) d\nu(\bsx) = Q_n (f), \quad f \in C(\bS^3).
\]
\section{Numerical experiments on $\bS^3$}\label{sec:numS3}
Let $X  = \{\bsx_1, \ldots, \bsx_M\}$ be a set of scattered points on the sphere $\bS^q$ and
let
\begin{equation}\label{def:Kex Knu}
\mathbb{K}_{\ell}=[K_{\ell}(\bsx_k,\bsx_j)]_{k,j=1}^M,  \quad \widehat{\mathbb{K}}_{\ell}=[K_{\ell}(\nu;\bsx_k,\bsx_j)]_{k,j=1}^M,
\quad \ell=0,1.
\end{equation}
Let $\lambda_{1,\ell},\cdots,\lambda_{M,\ell}$ be the eigenvalues of $\mathbb{K}_{\ell}$, and $\hat{\lambda}_{1,\ell},\cdots,\hat{\lambda}_{M,\ell}$ be the eigenvalue of $\widehat{\mathbb{K}}_{\ell}$, both arranged in a decreasing order. We compute the eigenvalue discrepancies by
\begin{equation}\label{eqn:SM}
F_{M,\ell} = \left(\sum_{k=1}^M (\lambda_{k,\ell} - \hat{\lambda}_{k,\ell})^2\right)^{1/2}, \quad \ell=0,1.    
\end{equation}

Let $\mu$ be a finite measure with equal weights and 
$\#\text{supp}(\mu)=\#\text{supp}(\nu)=N$ but the quadrature points are chosen randomly. We also compute the matrices 
\begin{equation}\label{def:Kran}
\widetilde{\mathbb K}_\ell = [K_\ell(\mu;\bsx_k,\bsx_j)]_{k,j=1}^M, \quad\ell=0,1,
\end{equation}
and the corresponding eigenvalue discepancies $\tilde{F}_{M,\ell} = 
\left(\sum_{k=1}^M (\lambda_{k,\ell}-\widetilde{\lambda}_{k,\ell})^2 \right)^{1/2}$ for $\ell=0,1$.

By exploiting the grid structure of the quadrature \eqref{eq:explicit_s3_quadrature} and \eqref{eq:def:Qn_on_Sq}, the storage of the quadrature is of order ${\mathcal O}(qn)$ instead of ${\mathcal O}(n^q)$ using random points. 

With $q=3$, and $\ell=0, 1$, the formula  \eqref{eqn:K-ell} is reduced to
\[
K_0(\bsx,\bsy) = \frac{1}{\pi^2}
\int_{\bS^3} (\hat{\bsw} \cdot \bsx)_{+}^0 (\hat{\bsw} \cdot \bsy)_{+}^0 d\mu_3^*(\hat{\bsw}),
\quad \bsx, \bsy \in \bS^3.
\]
and 
\[
K_1(\bsx,\bsy) = \frac{4}{\pi^2}
\int_{\bS^3} (\hat{\bsw} \cdot \bsx)_{+} (\hat{\bsw} \cdot \bsy)_{+} d\mu_3^*(\hat{\bsw}),
\quad \bsx, \bsy \in \bS^3.
\]
Let $g(x) = x/(1 + e^{-x})$, we also define the following kernel
\begin{equation}\label{defLKg}
    K_g(\bsx,\bsy) = \int_{\bS^3} g( \hat{\bsw} \cdot \bsx )
    g( \hat{\bsw} \cdot \bsy ) d\mu_3^*(\hat{\bsw}), \quad \bsx, \bsy \in \bS^3.
\end{equation}
We note that for the family $\{(\bsw\cdot\bsx)\}$, it is shown in \cite{sphrelu} that $\epsilon(\Pi_n^q)=\mathcal{O}(n^{-1})$. 
Since $g$ is analytic on $[-1,1]$ with the nearest singularities at $\pm i\pi$, the well known Bernstein theorem \cite[Chapter~7, Theorem~8.1]{devlorbk} can be used to show that for the family $\{g(\bsw\cdot\bsx)\}$, $\epsilon(\Pi_n^q)=\mathcal{O}(\rho^n)$ for some $\rho$ with $0<\rho<1$.
\subsection{Numerical validation}\label{bhag;numvalidation}
We now generate a set of $M$ scattered points on $\bS^3$ using an algorithm which partitions a finite-dimensional unit sphere into regions of equal area~\cite{leopardi2006partition}. We then compute the matrices $\mathbb{K}_0$ and $\widehat{\mathbb{K}}_0$ as defined in \eqref{def:Kex Knu} and the eigenvalue discrepancy $F_{M,0}$ as defined in \eqref{eqn:SM}. 

Let $\mu$ be a finite measure with equal weights, and let 
$N=\#\text{supp}(\mu)=\#\text{supp}(\nu)$, with the quadrature points chosen randomly. 
We then compute the matrix $\widetilde{\mathbb{K}}_0$ as in \eqref{def:Kran} and the eigenvalue discrepancy $\tilde{F}_{M,0}$.
\begin{table}[h]
\centering
\begin{tabular}{|c|c|c|c|c|c|c|}
\hline
$M$  &  $\|\mathbb{K}_0-\widehat{\mathbb{K}}_0\|$ & $F_{M,0}$& $\tilde{F}_{M,0}$ &$\|\mathbb{K}_1-\widehat{\mathbb{K}}_1\|$ & $F_{M,1}$& $\tilde{F}_{M,1}$  \\
\hline
100  & 0.0137 & 0.0120 & 0.0608 & 0.0039  & 8.0235e-04 & 0.0819  \\ 
200 & 0.0643 & 0.0374 & 0.1140 & 0.0053 & 8.2850e-04 & 0.1643\\ 
400 & 0.1421 & 0.0480 & 0.2191 & 0.0099 & 0.0012& 0.3302 \\
800 & 0.1086 & 0.0600 & 0.4513 & 0.0111 & 0.0024 & 0.6573\\
1600 & 0.2366& 0.0889 & 0.8698 & 0.0155 & 0.0016 & 1.3117 \\
\hline
\end{tabular}
\caption{Spectral errors and eigenvalue discrepancies between $\mathbb{K}_0$
and $\widehat{\mathbb K}_0$, between $\mathbb{K}_1$
and $\widehat{\mathbb K}_1$ 
with $N=\#\text{supp}(\nu)=134400$ quadrature points vs.
$N=\#\text{supp}(\mu)=134400$ random points on $\bS^3$. The random experiments were repeated $100$ times, then $\tilde{F}_{M,0}$ and $\tilde{F}_{M,1}$ are the average values.}
\label{tab:sp err K0 and K1 combined}
\end{table}


We then repeat the same experiments with
$\mathbb{K}_1=[K_1(\bsx_k,\bsx_j)]_{k,j=1}^M$,  $\widehat{\mathbb{K}}_1=[K_1(\nu;\bsx_k,\bsx_j)]_{k,j=1}^M$
and list the numerical results in Table~\ref{tab:sp err K0 and K1 combined}.

We now fix $M=500$ and vary the quadratures. We carried out experiments for both kernels $K_0$ and $K_1$, 
see Table~\ref{tab:sp err K0 fixed M and K1 fixed M} for the numerical results.
\begin{table}[h]
\centering
\begin{tabular}{|l|l|c|c|c|c|}\hline
$n$ & $N=|\nu|$ &  $F_{M,0}$& $\tilde{F}_{M,0}$
&$F_{M,1}$& $\tilde{F}_{M,1}$\\
\hline
10 & 2400 & 0.6076  &  2.5174  &  0.0458 &   2.0158 \\
20 & 17600 & 0.1315  &  0.7351 &   0.0068 &   0.6416 \\
30 & 57600 & 0.0412  &  0.4273 &   0.0040 &   0.3877 \\
40 & 134400& 0.0346  &  0.2814 &   0.0009 &   0.2456 \\
50 & 260000& 0.0190  &  0.2052 &   0.0008 &   0.1720 \\ 
\hline
\end{tabular}
\caption{Eigenvalue discrepancies between $\mathbb{K}_0$
and $\widehat{\mathbb K}_0$
with fixed $M=500$ and various quadrature points vs. random points on $\bS^3$. The random experiments were repeated $100$ times and 
$\tilde{F}_{M,0}$, $\tilde{F}_{M,1}$ are the average values.}
    \label{tab:sp err K0 fixed M and K1 fixed M}
\end{table}

We also carried out similar experiments on large values of $M$ and report the results in Table~\ref{tab:large M for K0 and K1}.
\begin{table}[h]
\centering
\begin{tabular}{|l|l|l|c|c|c|c|}\hline
$M$ & $n$ & $N=|\nu|$ &  $F_{M,0}$& $\tilde{F}_{M,0}$
&$F_{M,1}$& $\tilde{F}_{M,1}$\\
\hline
20000 & 10 & 2400 & 20.2898 &  62.4916 &   0.1778 & 204.5336\\
&20 & 17600 & 4.0535 &  66.2940 &   0.0177 & 100.2297\\
\hline
40000 & 10 & 2400 & 40.4005 & 233.2059&    0.3487 & 332.4455\\
      & 20 &17600 & 8.1239 & 120.3121&    0.0275&  170.0118 \\
\hline      
\end{tabular}
\caption{Eigenvalue discrepancies between $\mathbb{K}_0$
and $\widehat{\mathbb K}_0$, $\mathbb{K}_1$
and $\widehat{\mathbb K}_1$ 
with different values of $M$ and different quadrature points vs. random points on $\bS^3$.}
    \label{tab:large M for K0 and K1}
\end{table}


\newpage
We now examine the eigenvalues of the matrix 
\begin{equation}\label{equ:matU}
\widehat{\mathbb{K}}_0 + \lambda \mathbb{I} - (1-\Delta) (\Kex_0 + \lambda \mathbb{I}),    
\end{equation}
where $\widehat{\mathbb{K}}_0$ and $\Kex_0$ are defined in equation \eqref{def:Kex Knu}.
Let $\lambda_{i,0}$ be the $i^{th}$ eigenvalue, we compute $S_{\textnormal{lower}} = \sum_i |\lambda_{i,0}| \mathsf{1}_{\lambda_{i,0} < 0}$.  We equivalently compute $S_{\textnormal{upper}}$ from the matrix 
\begin{equation}\label{equ:matL}
(1+\Delta) (\Kex_0 + \lambda \mathbb{I}) - (\widehat{\mathbb{K}}_0 + \lambda \mathbb{I}),    
\end{equation}
and a final score $S_M = S_{\textnormal{lower}} +S_{\textnormal{upper}}$.  This score measures how far the associated differences are from being positive semi-definite. Similarly, we can define $S_R$ when replacing $\widehat{\mathbb{K}}_0$ in \eqref{equ:matU} and \eqref{equ:matL} using quadrature points with $\widetilde{\mathbb{K}}_0$ using random points. Figures~\ref{fig:arccosin K0} and
\ref{fig:arccosin K1} show the plots of the number of quadrature points on $\bS^3$ agains the scores $S_M$ for both quadrature measure and random measures for arccosine kernels $K_0$ and $K_1$. It can be seen that the quadrature score is better the score from random points for the smoother kernel $K_1$. We also compute the scores for a very smooth kernel $K_g$ defined in \eqref{defLKg}. Since all the scores $S_M$ are zeros, we listed the results in Table~\ref{tab:spectral score G}. The errors are computed against a reference kernel $\hat{K}_g$ computed with a quadrature of degree $n=60$ on $\bS^3$. 

\begin{table}[h]
\centering
\begin{tabular}{|l|l|l|l|l|l|l|}
\hline
&\multicolumn{2}{|c|}{$\Delta=\lambda=10^{-2}$}
&\multicolumn{2}{|c|}{$\Delta=\lambda=10^{-3}$}
&\multicolumn{2}{|c|}{$\Delta=\lambda=10^{-6}$}\\ \hline
$n$ &  $S_M$ & $S_R$ & $S_M$ & $S_R$ & $S$ & $S_R$ \\ \hline 
10 & 0  &  15.4615 &0  &  20.0825 & 0 & 25.3715 \\
20 & 0  &  4.2110  &0  &  7.5541  & 0 &  8.7889 \\
30 & 0  &  1.1911  &0  &  4.8493  & 0 &  5.0043  \\
40 & 0  &  0.0690  &0  &  2.3707  & 0 &  2.9237  \\
50 & 0  &  0.0321  &0  &  1.8609  & 0 &  1.8593 \\
60 & 0  &  0.0150 &  0 & 1.4455  &0  & 1.8462    \\
\hline
\end{tabular}
\caption{Spectral scores $S_M$ using kernel $K_g$ using $M=1000$ data points. The values $S_R$ are for random points.}
\label{tab:spectral score G}
\end{table}

\begin{figure}
\centering
\includegraphics[width=0.8\textwidth]{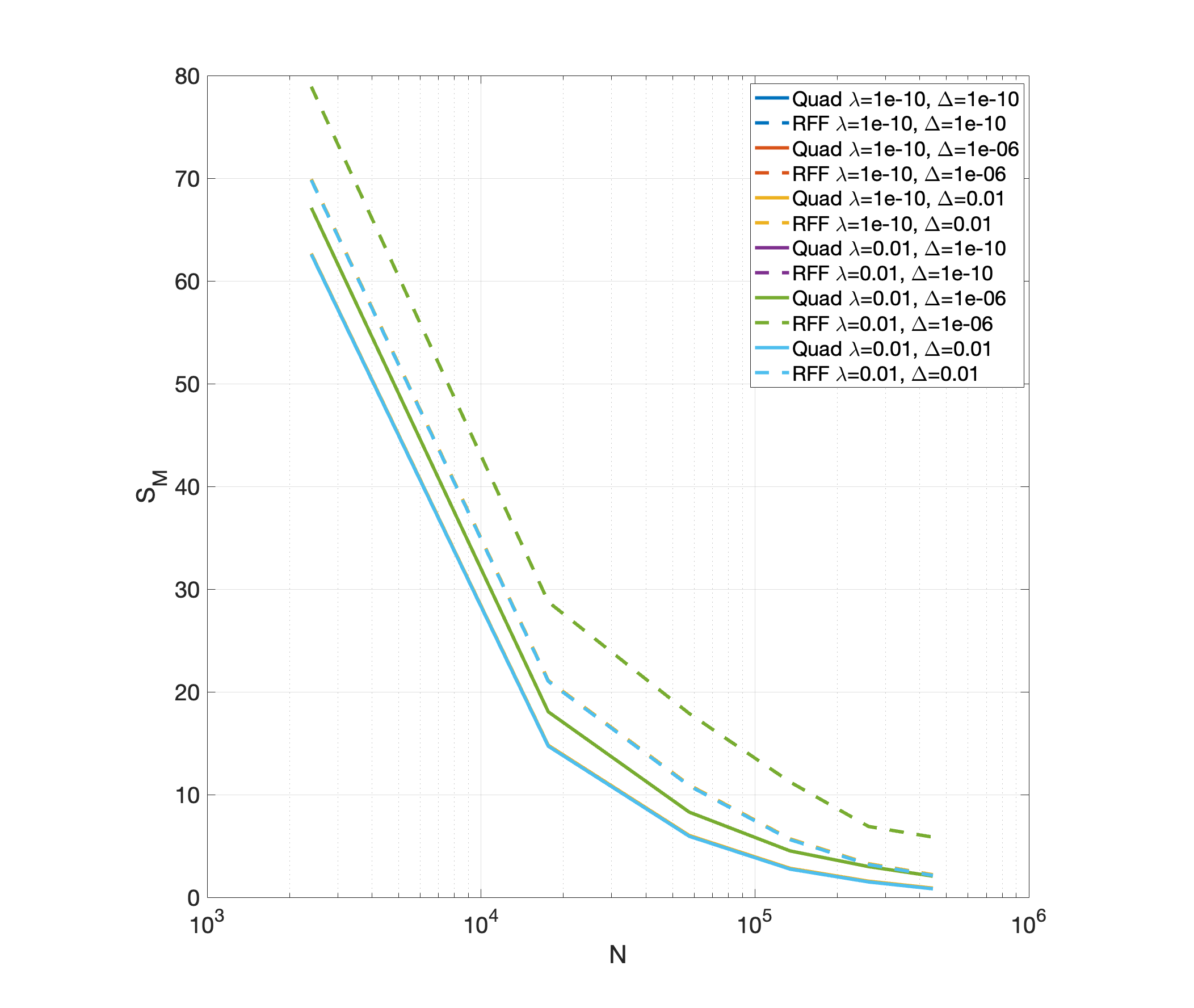}
\caption{Spectral scores for kernel built on $M=1000$ points sampled from arccosine kernel $K_0$ in $\mathbb{S}^3$.  Compares quadrature features vs random Fourier features.  }\label{fig:arccosin K0}
\end{figure}
\begin{figure}
\centering
\includegraphics[width=0.8\textwidth]{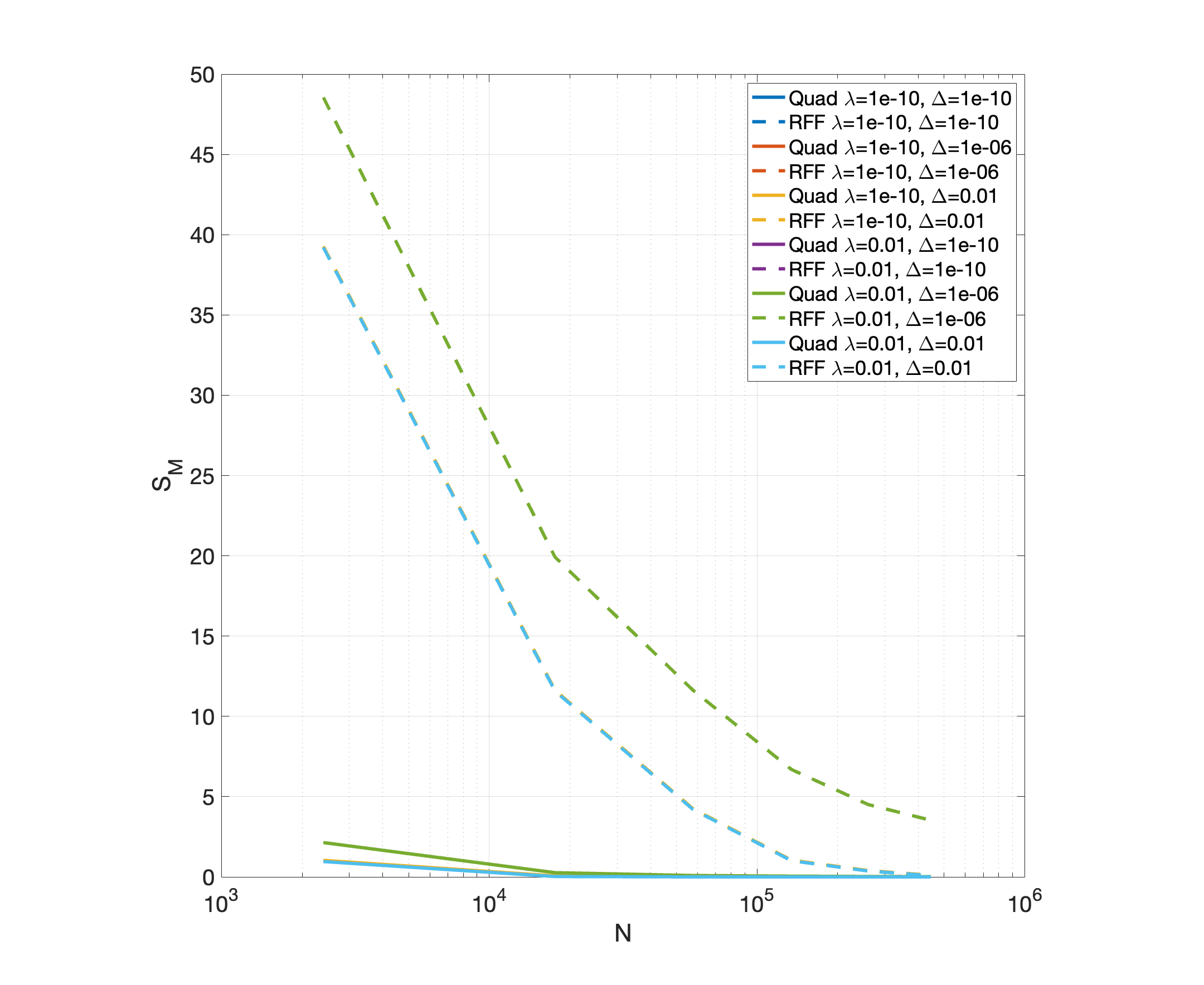}
\caption{Spectral scores for kernel built on $M=1000$ points sampled from arccosine kernel $K_1$ in $\mathbb{S}^3$.  Compares quadrature features vs random Fourier features.  }\label{fig:arccosin K1}
\end{figure}

\bhag{Conclusion}

In kernel based machine learning, one needs to consider a matrix of the form $[K(x_i,x_j)]_{i,j=1}^M$. This requires a storage of $M^2$ and the time complexity of the linear algebra operations may be as high as $\mathcal{O}(M^3)$.
The kernel trick to avoid this high complexity is to express $K$ in terms of an explicit kernel map, for example, random Fourier features for the Gaussian kernel. 
In this paper, we demonstrate that a systematic use of well designed quadrature formulas can yield a substantial improvement in the estimation of the eigenvalues of the kernel $K$ compared to random features. 
The theory is illustrated using numerical examples in the case of the Gaussian kernel using Gauss-Hermite quadrature formulas and in the case of some spherical neural tangent kernels using a tensor product quadrature formula.

\bibliographystyle{abbrv}
\bibliography{refs-pap}
\end{document}